\title{Tournaments, Johnson Graphs, and NC-Teaching} 
\author{Hans Ulrich Simon}
\newtheorem{theorem}{Theorem}[section]
\newtheorem{definition}[theorem]{Definition}
\newtheorem{lemma}[theorem]{Lemma}
\newtheorem{corollary}[theorem]{Corollary}
\newtheorem{remark}[theorem]{Remark}
\newtheorem{example}[theorem]{Example}
\newtheorem{claim}[theorem]{Claim}
\newcommand{\sm}{\setminus}
\newcommand{\eset}{\emptyset}
\newcommand{\cF}{{\mathcal F}}
\newcommand{\cK}{{\mathcal K}}
\newcommand{\cP}{{\mathcal P}}
\newcommand{\cX}{{\mathcal X}}
\newcommand{\cC}{{\mathcal C}}
\newcommand{\cT}{{\mathcal T}}
\newcommand{\cS}{{\mathcal S}}
\newcommand{\TD}{\mathrm{TD}}
\newcommand{\RTD}{\mathrm{RTD}}
\newcommand{\NCTD}{\mathrm{NCTD}}
\newcommand{\MTD}{\mathrm{M\mbox{-}TD}}
\newcommand{\ol}{\overline}
\newcommand{\seq}{\subseteq}
\newcommand{\ra}{\rightarrow}
\newcommand{\Span}[1]{\langle #1 \rangle}
\begin{document}

\maketitle

\begin{abstract}
Quite recently a teaching model, called ``No-Clash Teaching''
or simply ``NC-Teaching'', had been suggested that is provably 
optimal in the following strong sense. First, it satisfies 
Goldman and Matthias' collusion-freeness condition. Second, 
the NC-teaching dimension (= NCTD) is smaller than or equal 
to the teaching dimension with respect to any other collusion-free 
teaching model. It has also been shown that any concept class 
which has NC-teaching dimension $d$ and is defined over a domain 
of size $n$ can have at most $2^d \binom{n}{d}$ concepts. The main results
in this paper are as follows. First, we characterize the maximum
concept classes of NC-teaching dimension $1$ as classes which are 
induced by tournaments (= complete oriented graphs) in a very 
natural way. Second, we show that there exists 
a family $(\cC_n)_{n\ge1}$ of concept classes such that the well 
known recursive teaching dimension (= RTD) of $\cC_n$  grows 
logarithmically in $n = |\cC_n|$ while, for every $n\ge1$, 
the NC-teaching dimension of $\cC_n$ equals $1$. 
Since the recursive teaching dimension of a finite concept 
class $\cC$ is generally bounded $\log|\cC|$, the 
family $(\cC_n)_{n\ge1}$ separates RTD from NCTD in the most
striking way. The proof of existence of the family $(\cC_n)_{n\ge1}$
makes use of the probabilistic method and random tournaments.
Third, we improve the afore-mentioned upper bound $2^d\binom{n}{d}$
by a factor of order $\sqrt{d}$. The verification of the
superior  bound makes use of Johnson graphs and maximum subgraphs
not containing large narrow cliques.  
\end{abstract}

\section{Introduction}

Learning from examples that were carefully chosen by a teacher
(e.g.~a human expert) presents an alternative to the commonly
used model of learning from randomly chosen examples. A model
of teaching should be sufficiently restrictive to
rule out collusion between the learner and the teacher.
For instance, the teacher should not be allowed to encode
a direct representation of the target concept (such as a Boolean
formula or a neural network) within the chosen sequence of examples.
\cite{GM1996} suggested to consider a learner-teacher pair as
collusion-free if it satisfies the following condition:
if the learner is in favor of concept $C$ after having seen
the labeled teaching set $\cT$ chosen by the teacher, the learner
should again be in favor of $C$ after having seen a superset $\cS$
of $\cT$ as long as the label assignment in $\cS$ still coincides
with the label assignment induced by~$C$. In other words:
the learners guess $C$ for the target concept should not be
altered when the data give even more support to $C$ than
the original labeled teaching set $\cT$ is giving. 
Most existing abstract models of teaching are collusion-free 
in this sense.  Quite recently, \cite{KSZ2019} introduced 
a new model, called no-clash teaching or simply NC-teaching, 
that is collusion-free and furthermore optimal in the following
strong sense: \\
For any model $M$, let $\MTD(\cC)$ denote the corresponding
teaching dimension of concept class~$\cC$ (= smallest number
that upper-bounds the size of any of the employed teaching
sets provided that learner and teacher interact as prescribed
by model $M$). Then $\NCTD(\cC) \le \MTD(\cC)$ holds for any
model $M$ that satisfies Goldman and Mathias' collusion-freeness
criterion.

\smallskip\noindent
In this paper, we pursue the following questions:
\begin{itemize}
\item
What is the maximum size of a concept class which has NC-dimension $d$
and is defined over a domain of size $n$?
\item
How do the classes of maximum size look like?
\item
How does the NC-model of teaching relate to well known
model of recursive teaching?
\end{itemize}
Before we outline the structure of this paper, we put the
first two of these questions into a more general context.

\subsection{Bounds of the Sauer-Shelah Type}

A well-known lemma of Sauer~\cite{S1972} and Shelah~\cite{SH1972} 
states that a concept class of VC-dimension $d$ can induce at most
\[ \Phi_d(m) = \sum_{i=0}^{d}{m \choose d} \]
distinct binary label patterns on $m \ge d$ instances (taken
from the underlying domain). This implies that a concept class
of VC-dimension $d$ that is defined over a domain of size $n$ contains
at most $\Phi_d(n)$ distinct concepts. More results of the
Sauer-Shelah type are known in the literature. Here we focus
on results that are related to teaching. Consider, for instance,
the model of recursive teaching (introduced by~\cite{ZLHZ2011}).
As shown by~\cite{SSYZ2014}, $\Phi_d(n)$ also upper-bounds the
size of any concept class which has recursive teaching dimension~$d$
and is defined over a domain of size $n$. As shown by~\cite{KSZ2019},
$2^d{n \choose d}$ upper-bounds the size of any concept class
of NC-teaching dimension $d$ that is defined over a domain of size $n$.
While the upper bound $\Phi_d(n)$ is tight if $d$ equals
the VC-dimension or the recursive teaching dimension,
the corresponding bound $2^d{n \choose d}$ in case of
$d = \NCTD(\cC)$ is tight only for $d=1$ (as we will show
in this paper).

\subsection{Maximum Concept Classes}

Concept classes of VC-dimension $d$ inducing $\Phi_d(m)$
distinct binary label patterns on any sequence of $m$
distinct instances are called ``maximum classes'' (a
notion that dates back to early work of~\cite{W1987}).
The theoretical study of maximum classes had been fruitful
for several reasons:
\begin{itemize}
        \item Although there is a wide variety of maximum classes,
                they have much structure in common. This structure
                can be uncovered by exploiting the general definition
                of a maximum class (which abstracts away the
                peculiarities of specific maximum classes).
        \item The investigation of maximum classes and their
                structural properties often leads to problems
                with a combinatorial flavor that may be considered
                interesting in their own right.
        \item The validity of conjectures, believed to hold for
                arbitrary concept classes (like, for instance,
                the Sample-Compression conjecture of \cite{W2003})
                can be tested by showing their validity for
                maximum classes (as it has been done successfully
                by~\cite{FW1995} and~\cite{CCMW2019}).
\end{itemize}
Given the teaching-related bounds of the Sauer-Shelah type,
it is a natural idea to define and examine maximum classes 
in that context too. Here we are particularly interested in 
``NC-maximum classes'', i.e., concept classes of maximum size
among the ones having NC-dimension $d$ and being defined over 
a domain of size $n$.

\subsection{Structure of the Paper}

In Section~\ref{sec:facts}, we call into mind the definition
of various teaching models and the corresponding teaching dimensions. 
Section~\ref{sec:tournaments} contains the results which are
related to tournaments. We first define two concept classes,
the class  $\cC^1[G]$ of size $n$ and the class $\cC^2[G]$ of size $2n$, 
both of which are induced by a tournament~$G$ with $n$ vertices. 
Then we show the following results:
\begin{itemize}
\item
A concept class over domain $[n]$ is an NC-maximum class of 
NC-dimension $1$ if and only if there exists a tournament $G$ 
with $n$ vertices such that $\cC = \cC^2[G]$.
\item
For every tournament $G$, the the class $\cC^1[G]$ has NC-teaching 
dimension $1$.
\item
There is a strictly positive probability for the event that a random
tournament $G$ induces a class $\cC^1[G]$ whose recursive teaching 
dimension is at least $\log(n) - O(\log\log(n))$.
\end{itemize}
The last two results establish an RTD-NCTD ratio of order $\log n$. 
This is particularly remarkable since $\RTD(\cC)$ is upper-bounded 
by $\log|\cC|$ for every finite concept class $\cC$. 
In Section~\ref{sec:johnson-graphs}, one finds the results which are related
to Johnson-graphs. It is shown that a concept class which has NC-dimension $d$
and is defined over a domain of size $n$ contains 
at most $\left(2\sqrt{\frac{2}{d+1}} - \frac{2}{d+1}\right) \cdot 2^d\binom{n}{d}$
concepts. This improves the best previously known upper bound, $2^d\binom{n}{d}$,
by a factor of order~$\sqrt{d}$. It also shows that the size of an NC-maximum
class of NC-dimension $d \ge 2$ is strictly smaller than $2^d\binom{n}{d}$.
The key lemma behind this result is Lemma~\ref{lem:no-narrow-clique}, 
which relates the NC-teaching sets for a concept class $\cC$ to subgraphs 
of a Johnson graph which do not contain large narrow cliques. 
The final Section~\ref{sec:open-problems} mentions some open problems.

\section{Definitions, Notations and Facts} \label{sec:facts}

As usual a \emph{concept over domain $\cX$} is a function 
from $\cX$ to $\{0,1\}$ or, equivalently, a subset of~$\cX$.
A set whose elements are concepts over domain $\cX$ is referred to 
as a \emph{concept class over $\cX$}. The elements of $\cX$ are called 
\emph{instances}. The powerset of $\cX$ is denoted by $\cP(\cX)$. 
The set of all subsets of size $d$ of $\cX$ is denoted by $\cP_d(\cX)$. 
we refer to elements of $\cP_d(\cX)$ as \emph{$d$-subsets} of $\cX$.


\begin{definition}[Teaching Models~\cite{GK1995,ZLHZ2011,KSZ2019}]
\label{def:teaching-models}
Let $\cC$ be a concept class over $\cX$.  
\begin{enumerate} 
\item
A \emph{teaching set for $C \in \cC$} is a subset $D \seq \cX$
which distinguishes $C$ from any other concept in $\cC$, i.e.,
for every $C' \in \cC\sm\{C\}$, there exists some $x \in D$
such that $C(x) \neq C'(x)$. The size of the smallest teaching set
for $C \in \cC$ is denoted by $\TD(C,\cC)$. The \emph{teaching 
dimension of $\cC$ in the Goldman-Kearns  model of teaching} is 
then given by
\[ 
\TD(\cC) = \max_{C \in \cC} |T(C,\cC)| \enspace . 
\]
A related quantity is
\[ 
\TD_{min}(\cC) = \min_{C \in \cC} |T(C,\cC)| \enspace . 
\]
\item
Let $T:\cC \ra \cP(\cX)$ be a mapping that assigns to every concept 
in $\cC$ a set of instances. 
$T$ is called \emph{admissable for $\cC$ in the NC-model\footnote{NC = No-Clash.} 
of teaching}, or simply an \emph{NC-teacher for $\cC$},  if, 
for every $C \neq C' \in \cC$, there exists $x \in T(C) \cup T(C')$
such that $C(x) \neq C'(c)$. The \emph{teaching dimension of $\cC$ 
in the NC-model of teaching} is given by
\[ \NCTD(\cC) = \min \{\max_{C \in \cC} |T(C)|: T\mbox{ is an NC-teacher for $\cC$}\} 
\enspace . 
\]
\item
Let $\cC_{min} \seq \cC$ be the easiest-to-teach concepts in $\cC$, i.e., 
\[
\cC_{min} = \{C \in \cC: \TD(C,\cC) = \TD_{min}(\cC)\} \enspace .
\]
The \emph{recursive teaching dimension of $\cC$} is then given by
\[ 
\RTD(\cC) = \left\{ \begin{array}{ll}
              \TD_{min}(\cC) & \mbox{if $\cC = \cC_{min}$} \\
              \max\{\TD_{min}(\cC) , \RTD(\cC\sm\cC_{min})\} &
              \mbox{otherwise}
            \end{array} \right.
\enspace . 
\]  
\end{enumerate}
In all three models, the set $T(C)$ is referred to as the
\emph{teaching set for $C$}. The \emph{order of $T$} is defined as 
the size of the largest of $T$'s teaching sets, i.e.,
order$(T) = \max_{C\in\cC}|T(C)|$.
\end{definition}

\noindent
Some remarks are in place here:
\begin{enumerate}
\item
It was shown in~\cite{DFSZ2014} that
\begin{equation} \label{eq:rtd-tdmin}
\RTD(\cC) = \max_{\cC' \seq \cC}\TD_{min}(\cC') \enspace . 
\end{equation}
\item
The set $T(C)$ in Definition~\ref{def:teaching-models} is an 
{\em unlabeled} set of instances. Intuitively, one should think 
of the learner as receiving the correctly \emph{labeled teaching set},
i.e.,the learner receives $T(C)$ {\em plus} the corresponding $C$-labels 
where $C$ is the concept that is to be taught. 
\item
We say that two concepts $C$ and $C'$ {\em clash} (with respect
to $T:\cC \ra \cP(\cX)$) if they agree on $T(C) \cup T(C')$, i.e,
if they assign the same $0,1$-label to all instances in $T(C) \cup T(C')$. 
NC-teachers for $\cC$ are teachers who avoid clashes between any pair 
of distinct concepts from $\cC$. 
\end{enumerate}

\noindent
As already observed by~\cite{KSZ2019}), NC-Teachers can be normalized:
\begin{itemize}
\item
We may assume without loss of generality that $|T(C)|=d$ 
for every $C\in\cC$ where $d$ denotes the order of $T$. 
\end{itemize}
This will be henceforth assumed. Let $n=|\cX|$ and $0 \le d \le n$. 
An NC-teacher $T$ for $\cC$ of order $d$ then assigns to every concept $C\in\cC$
a set taken from $\cP_d(\cX)$. Clearly $|\cP_d(\cX)| = {n \choose d}$ and,
for each fixed set $S\in\cP_d(\cX)$, there can be at most $2^d$ distinct
concepts in $\cC$ with NC-teaching set $S$. For this simple reason,
the following holds:

\begin{theorem}[\cite{KSZ2019}] \label{th:KSZ-bound}
Any concept class which has NC-teaching dimension $d$ and 
is defined over a domain of size $n$ contains at most $2^d{n \choose d}$
concepts.
\end{theorem}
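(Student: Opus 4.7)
The plan is to follow exactly the hint given in the paragraph preceding the statement: I would use the normalization observation to identify each concept with a $d$-subset of $\cX$, then argue that each such $d$-subset can be the teaching set of at most $2^d$ concepts, and finally multiply.

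Concretely, first I would fix an optimal NC-teacher $T:\cC\ra\cP(\cX)$ witnessing $\NCTD(\cC)=d$ and apply the normalization remark to assume $|T(C)|=d$ for every $C\in\cC$. Thus $T$ becomes a map $\cC\ra\cP_d(\cX)$. Since $|\cP_d(\cX)|=\binom{n}{d}$, the bound $|\cC|\le 2^d\binom{n}{d}$ will follow once I show that for every fixed $S\in\cP_d(\cX)$ the fibre $T^{-1}(S)=\{C\in\cC:T(C)=S\}$ has size at most $2^d$.

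The bound on each fibre is where the no-clash property is used. Suppose $C,C'\in T^{-1}(S)$ are distinct. Then $T(C)\cup T(C')=S$, and because $T$ is an NC-teacher there must exist some $x\in S$ with $C(x)\ne C'(x)$. In other words, the restrictions of $C$ and $C'$ to $S$ are distinct elements of $\{0,1\}^S$. Hence the map $C\mapsto C|_S$ is injective on $T^{-1}(S)$, so $|T^{-1}(S)|\le 2^{|S|}=2^d$. Summing (or multiplying) over the at most $\binom{n}{d}$ possible teaching sets then yields $|\cC|\le 2^d\binom{n}{d}$.

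There is essentially no obstacle here: the statement is a counting lemma whose two ingredients, namely the normalization of NC-teachers to constant order $d$ and the injectivity of $C\mapsto C|_{T(C)}$ on a common fibre, are both immediate from the definitions. The only minor point to be careful about is the boundary cases $d=0$ and $d=n$, which are both covered by the same argument (when $d=0$ every nonempty fibre has exactly one concept, and when $d=n$ every concept is its own teaching set).
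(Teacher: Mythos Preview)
Your proposal is correct and follows exactly the same route as the paper: the paper's ``proof'' is precisely the paragraph preceding the theorem, which normalizes the NC-teacher to order $d$, notes $|\cP_d(\cX)|=\binom{n}{d}$, and observes that at most $2^d$ concepts can share a given teaching set $S$. Your write-up merely spells out the fibre bound via the injectivity of $C\mapsto C|_S$, which is the implicit reason behind the paper's one-line claim.
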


An \emph{NC-maximum class (with respect to parameters $n$ and $d$)} 
is a concept class of largest size among all classes having NC-dimension $d$ 
and being defined over a domain $\cX$ of size $n$, say $\cX = [n]$. The size 
of such a class will be denoted by $M_{NC}(n,d)$ throughout this paper. 
According to Theorem~\ref{th:KSZ-bound}, $M_{NC}(n,d) \le 2^d\binom{n}{d}$.
We will see in the course of this paper that this upper bound
on $M_{NC}(n,d)$ is tight only for $d=1$ and, for $d\ge2$, 
it can be improved (at least) by a factor of order $\sqrt{d}$.

\section{Results on Concept Classes Induced by Tournaments} 
\label{sec:tournaments}

\noindent
The following notion will play a central role in this section:
\begin{definition}[concept class induced by a tournament] 
\label{def:tournament-class} 
        Let $G = ([n],E)$ be a \emph{tournament with $n$ players}, i.e.,
        $G$ is a directed graph obtained from the complete graph with vertices
        $1,\ldots,n$ by giving every edge $\{i,j\}$ an
        orientation (either $(i,j)$ or $(j,i)$).\footnote{Intuitively,
        edge $(i,j)$ represents the event that $j$ has lost against $i$
        in the tournament.} The concept classes $\cC^1[G]$ and $\cC^2[G]$
        are given by
        \[ 
        \cC^1[G] = \{\ol{C_1},\ldots,\ol{C_n}\} \   
        \mbox{ and }\  
        \cC^2[G] = \{C_1,\ldots,C_n\} \cup \{\ol{C_1},\ldots,\ol{C_n}\} 
        \]
        where, for $j=1,\ldots,n$, we set
        \begin{equation} \label{eq:G-concepts}
        C_j = \{i\in[n]: (i,j) \in E\}\ \mbox{ and }\ \ol{C_j} = [n] \sm C_j
        \enspace .
        \end{equation}
        We will refer to $\cC^1[G]$ (resp.~to $\cC^2[G]$) as \emph{the first
        (resp.~the second) concept class induced by~$G$}. 
\end{definition}
Intuitively, we may think of $C_j$ as consisting of all $i$
who have won against $j$ in the tournament~$G$. Note that $j \in \ol{C_j}$.

\begin{example} \label{ex:tournament}
        Consider the tournament $G_n$ with vertices $1,\ldots,n$ and,
        with edges $(i,j)$ for all $1 \le i < j \le n$ (i.e., edges
        are always directed from smaller to larger numbers).
        Let $\cC_n^2 = \cC^2[G_n]$ denote the second concept class 
        induced by $G_n$.
        Then the concepts in $\cC^2_n$ are $C_j = \{1,\ldots,j-1\}$
        and $\ol{C_j} = \{j,\ldots,n\}$ for $j=1,\ldots,n$.
        In other words, $\cC^2_n$ contains all left half-intervals over
        domain $[n]$ (including~$\eset$ but excluding $[n]$) and all
        right half-intervals (including $[n]$ but excluding~$\eset$).
\end{example}

We will show in Section~\ref{subsec:dim1} that a concept class $\cC$
over $[n]$ is an NC-maximum class of NC-teaching dimension $1$
if and only if there exists some tournament $G$ with $n$ players
such that $\cC = \cC^2[G]$. In Section~\ref{subsec:gap}, we show that 
there exists a family $(\cC_n)_{n\ge1}$ of concept classes 
such that $\RTD(\cC_n)$ grows logarithmically with $n = |\cC_n|$
while $\NCTD(\cC_n)=1$ for all $n\ge1$.
This is proven by the probabilistic method which deals here with
the concept class $\cC^1(G_n)$ for a random tournament $G_n$ with $n$
players.

\subsection{NC-Maximum Classes of Dimension 1} \label{subsec:dim1}

Here is the main result of this section:

\begin{theorem} \label{th:dim1}
        A concept class $\cC$ over $[n]$ is an NC-maximum class
        of NC-teaching dimension~$1$ if and only if $\cC = \cC^2[G]$
        for some tournament $G$ with $n$ players.
\end{theorem}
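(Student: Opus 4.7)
The plan is to handle the two directions separately. For sufficiency, given a tournament $G$, I would first confirm that the $2n$ sets defining $\cC^2[G]$ are pairwise distinct: the pair $\{C_j,\ol{C_j}\}$ is separated by $j$, and two concepts with distinct indices $i\neq j$ are separated by reading off the four labels $C_i(j),\ol{C_i}(j),C_j(i),\ol{C_j}(i)$ determined by the orientation of $\{i,j\}$. I would then verify that the natural map $T(C_j)=T(\ol{C_j})=\{j\}$ is an NC-teacher, which reduces to checking that the four concepts $C_i,\ol{C_i},C_j,\ol{C_j}$ do not pairwise clash on $\{i,j\}$ — a short case check depending only on whether $(i,j)\in E$ or $(j,i)\in E$. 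Since $|\cC^2[G]|=2n$ matches the upper bound $2\binom{n}{1}$ from Theorem~\ref{th:KSZ-bound}, this proves NC-maximality and $\NCTD(\cC^2[G])=1$.

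\textbf{For the converse}, let $\cC$ be an NC-maximum class with $\NCTD(\cC)=1$, and fix a normalized NC-teacher $T$ so that every teaching set is a singleton of $[n]$. Two concepts sharing a singleton teaching set $\{x\}$ must disagree at $x$ on pain of clashing, so at most two concepts can be taught at any given $x$; by maximality $|\cC|=2n$, and pigeonhole forces equality for every $x$. This lets me label the concepts unambiguously as $C_x^0,C_x^1$ with $T(C_x^b)=\{x\}$ and $C_x^b(x)=b$.

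\textbf{The key structural claim}, and the main obstacle in the proof, is that $C_x^1=[n]\sm C_x^0$ for every $x$, i.e., the two concepts taught at $x$ are complementary. I would establish this by contradiction from a small case analysis: if $C_x^0(y)=C_x^1(y)=b$ at some $y\neq x$, then the NC-constraints applied to the pairs $(C_x^0,C_y^b)$ and $(C_x^1,C_y^b)$ force contradictory values of $C_y^b(x)$. Once complementarity is in hand, I would set $E=\{(i,j):i\neq j,\ C_j^0(i)=1\}$: the non-clash condition for $(C_i^0,C_j^0)$ rules out $C_j^0(i)=0=C_i^0(j)$, while the one for $(C_i^0,C_j^1)$ — read through complementarity — rules out $C_j^0(i)=1=C_i^0(j)$; hence exactly one of $(i,j),(j,i)$ lies in $E$, so $G=([n],E)$ is a tournament. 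Unwinding the definition of $\cC^2[G]$ then gives $C_j^0=\{i:(i,j)\in E\}$ and $C_j^1=\ol{C_j^0}$, whence $\cC=\cC^2[G]$ as required.
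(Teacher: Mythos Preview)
Your proposal is correct and follows essentially the same architecture as the paper's proof: the same NC-teacher $T(C_j)=T(\ol{C_j})=\{j\}$ for sufficiency, and for necessity the same pigeonhole identification of two concepts per singleton, followed by complementarity and the extraction of a tournament. The one notable presentational difference is in the complementarity step: the paper sets up a cycle of eight equivalent assertions (each implied by its predecessor via a single no-clash constraint) and reads off both complementarity and the tournament property from that cycle, whereas you prove complementarity directly by the short contradiction involving $C_y^b$ and then verify the tournament property by two further clash checks --- your route is arguably more transparent, while the paper's cyclic argument packages both conclusions into one equivalence.
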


\begin{proof}
        Suppose first that $\cC = \cC^2[G]$ for some
        tournament $G=([n],E)$. Then $\cC^2[G]$ contains the $2n$ concepts
        that are given by~(\ref{eq:G-concepts}). Consider the
        mapping $T:\cC^2[G] \ra \cP_1([n])$ that assigns $\{j\}$ to the
        concepts $C_j$ and $\ol{C_j}$ for $j=1,\ldots,n$.
        $T$ avoids clashes between any pair of distinct concepts, which
        can be seen as follows:
        \begin{itemize}
                \item Since $j \notin C_j$ and $j\in\ol{C_j}$ holds
                        for every $j\in[n]$, there is no clash
                        between $C_j$ and~$\ol{C_j}$. 
        \end{itemize}
        Assume now that $i$ and $j$ are two arbitrary but distinct indices 
        from~$[n]$. The following observations show that neither $C_i$ 
        and $C_j$ nor $\ol{C_i}$ and $C_j$ agree on $\{i,j\}$:
        \begin{itemize}
                \item If $C_j$ agrees with $C_i$ on $\{i\}$, then $i \notin C_j$.
                        It follows that $(i,j) \notin E$. Thus $(j,i) \in E$ so
                        that $j \in C_i$, which means that $C_i$ disagrees
                        with $C_j$ on $\{j\}$. Hence there is no clash 
                        between  $C_i$ and $C_j$.
                \item By symmetry, it follows that there is no clash
                        between $\ol{C_i}$ and $\ol{C_j}$.
                \item If $C_j$ agrees with $\ol{C_i}$ on $\{i\}$,
                        then $i \in C_j$ so that $(i,j) \in E$. It follows
                        that $(j,i) \notin E$ so that $j \notin C_i$.
                        Thus $j\in\ol{C_i}$, which means that $\ol{C_i}$
                        disagrees with $C_j$ on $\{j\}$. Hence there is no clash
                        between $\ol{C_i}$ and $C_j$.
        \end{itemize}
        The above discussion shows that $T$ avoids clashes and is therefore
        an NC-teacher for $\cC^2[G]$. Since all teaching sets are of size $1$, 
        we get $\NCTD(\cC^2[G])=1$. In view of Theorem~\ref{th:KSZ-bound},
        the class $\cC^2[G]$ with $2n = 2^1{n \choose 1}$ concepts
        is an NC-maximum class. \\
        Suppose now that $\cC$ is an NC-maximum class of NC-teaching
        dimension $1$ over $[n]$. The first part of the proof
        in combination with Theorem~\ref{th:KSZ-bound} implies that $|\cC|=2n$.
        Let $T:\cC\ra\cP_1([n])$ be an NC-teacher for $\cC$.
        It follows that each set $\{j\}\in\cP_1([n])$ is assigned to exactly
        two concepts. Moreover these two concepts must disagree on $\{j\}$.
        We denote the concept with NC-teaching set $\{j\}$ that contains $j$
        (resp.~does not contain $j$) by $\ol{C_j}$ (resp.~by $C_j$).
        Fix two indices $i \neq j$ and consider the following assertions:
        \begin{enumerate}
                \item $C_j$ disagrees with $C_i$ on $\{i\}$.
                \item $C_j$ agrees with $\ol{C_i}$ on $\{i\}$.
                \item $\ol{C_i}$ disagrees with $C_j$ on $\{j\}$.
                \item $\ol{C_i}$ agrees with $\ol{C_j}$ on $\{j\}$.
                \item $\ol{C_j}$ disagrees with $\ol{C_i}$ on $\{i\}$.
                \item $\ol{C_j}$ agrees with $C_i$ on $\{i\}$.
                \item $C_i$ disagrees with $\ol{C_j}$ on $\{j\}$.
                \item $C_i$ agrees with $C_j$ on  $\{j\}$.
        \end{enumerate}
        Since the assignment of NC-teaching sets to concepts avoids clashes,
        it is easily seen that the following holds:
        \begin{itemize}
                \item Any assertion is an immediate logical consequence
                        of the preceding one.
                \item The first assertion is an immediate logical
                        consequence of the last one.
        \end{itemize}
        It follows that these eight assertions are equivalent.
        An inspection of the second and the fifth assertion
        reveals that $\ol{C_j} = [n] \sm C_j$.
        Consider now the directed graph $G=([n],E)$ with
        \[
        E = \{(i,j): \mbox{$C_j$ agrees with $\ol{C_i}$ on $\{i\}$}\}
        \enspace .
        \]
        An inspection of the second and the seventh assertion
        reveals that exactly one of the edges $(i,j)$ and $(j,i)$
        belongs to $E$. It follows that $G$ is a tournament.
        Moreover, the above definition of $E$ makes sure that,
        for every $j\in[n]$, $C_j = \{i\in[n]: (i,j) \in E\}$.
        We may therefore conclude that $\cC = \cC^2[G]$.
\end{proof}

\noindent
The following result, which shows that the general 
inequality $M_{NC}(n,d) \le 2^d\binom{n}{d}$ holds with equality for $d=1$, 
is a direct consequence of Theorem~\ref{th:dim1}:

\begin{corollary} \label{cor1:dim1}
        For all $n\ge1$: $M_{NC}(n,1)=2n$.
\end{corollary}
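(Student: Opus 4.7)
The corollary is essentially an immediate consequence of Theorem~\ref{th:dim1} combined with Theorem~\ref{th:KSZ-bound}, so the ``plan'' is really just to assemble the two bounds. For the upper bound, Theorem~\ref{th:KSZ-bound} specialized to $d=1$ gives $M_{NC}(n,1) \le 2^1 \binom{n}{1} = 2n$, and no further work is required.

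For the matching lower bound, I would exhibit a concrete concept class of size $2n$ having NC-teaching dimension $1$ over $[n]$. The natural choice is $\cC^2[G]$ for any tournament $G$ on $[n]$ whatsoever; the linear tournament $G_n$ from Example~\ref{ex:tournament} will do. The first half of the proof of Theorem~\ref{th:dim1} already verifies that the map $T(C_j) = T(\ol{C_j}) = \{j\}$ is an NC-teacher for $\cC^2[G]$, so $\NCTD(\cC^2[G]) = 1$.

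The only remaining issue is to confirm that $|\cC^2[G]| = 2n$, i.e., that the $2n$ listed concepts $C_1,\dots,C_n,\ol{C_1},\dots,\ol{C_n}$ are pairwise distinct. This is a short check from Definition~\ref{def:tournament-class}: since $j \in \ol{C_j}$ and $j \notin C_j$, no $C_j$ coincides with any $\ol{C_k}$ (in particular not with $\ol{C_j}$); and for $i \ne j$, the tournament property forces exactly one of $(i,j),(j,i)$ to lie in $E$, so $C_i$ and $C_j$ differ on $\{i,j\}$ and likewise $\ol{C_i} \ne \ol{C_j}$. Combining $|\cC^2[G]| = 2n \le M_{NC}(n,1) \le 2n$ yields the claim. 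I do not anticipate any real obstacle here; the content of the corollary lies entirely in Theorem~\ref{th:dim1}.
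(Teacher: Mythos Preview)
Your proposal is correct and mirrors exactly what the paper intends: the upper bound comes from Theorem~\ref{th:KSZ-bound} with $d=1$, and the lower bound from the first half of the proof of Theorem~\ref{th:dim1}, which exhibits $\cC^2[G]$ as a class of size $2n$ with $\NCTD=1$. One tiny quibble: your stated reason ``$j\in\ol{C_j}$ and $j\notin C_j$'' only directly rules out $C_j=\ol{C_j}$, not $C_j=\ol{C_k}$ for $k\neq j$---but this is moot, since the no-clash argument you already cite from Theorem~\ref{th:dim1} shows every pair of listed concepts disagrees on $\{i,j\}$ and hence is distinct.
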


\noindent
Here is another direct consequence of Theorem~\ref{th:dim1}:

\begin{corollary} \label{cor2:dim1}
For $n\ge1$ and every tournament $G$ with $n$ players: $\NCTD(\cC^1[G]) = 1$.
\end{corollary}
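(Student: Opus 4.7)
The plan is to piggyback on the NC-teacher for $\cC^2[G]$ that was built in the first half of the proof of Theorem~\ref{th:dim1}. Since $\cC^1[G]=\{\ol{C_1},\ldots,\ol{C_n}\}$ is a subclass of $\cC^2[G]$, the restriction of any NC-teacher for $\cC^2[G]$ to $\cC^1[G]$ remains an NC-teacher: removing concepts from a class can only shrink the set of pairs that need to be separated, so no new clashes can appear. Concretely, the map $\ol{C_j}\mapsto\{j\}$ already appears in the teacher constructed for $\cC^2[G]$, and the ``by symmetry'' bullet of that proof explicitly verified that no two concepts $\ol{C_i}$ and $\ol{C_j}$ clash under this assignment. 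This yields $\NCTD(\cC^1[G])\le 1$ at once.

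For the matching lower bound, I would argue that $\cC^1[G]$ contains $n$ pairwise distinct concepts whenever $n\ge 2$: for $i\neq j$, exactly one of the arcs $(i,j),(j,i)$ lies in $E$, and whichever it is, a one-line check shows that $i$ (or $j$) lies in one of $\ol{C_i},\ol{C_j}$ but not the other. A concept class of size at least two cannot admit an NC-teacher in which every teaching set is empty, so $\NCTD(\cC^1[G])\ge 1$, and combining the two bounds gives equality.

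I do not anticipate any real obstacle; the corollary is essentially a free byproduct of Theorem~\ref{th:dim1} together with the monotonicity of the no-clash property under deletion of concepts. The only minor subtlety is the degenerate case $n=1$, in which $\cC^1[G]$ is a singleton and a size-$0$ teaching map already works; depending on the convention adopted for the NC-dimension of a singleton class, this boundary case either has to be excluded or absorbed by a one-line remark.
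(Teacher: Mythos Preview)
Your proof is correct and follows essentially the same route as the paper, which also invokes monotonicity of $\NCTD$ under inclusion (from $\cC^1[G]\subseteq\cC^2[G]$ and Theorem~\ref{th:dim1}) together with the trivial lower bound $\NCTD(\cC^1[G])\ge1$. Your remark about the degenerate case $n=1$ is well taken: the paper simply asserts ``Clearly $\NCTD(\cC^1[G])\ge1$'' without addressing that a singleton class has $\NCTD=0$ under the definition given, so you are in fact slightly more careful here than the original.
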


\begin{proof}
Since $\cC^1[G] \seq \cC^2[G]$, a simple monotonicity argument shows 
that $\NCTD(\cC^1[G]) \le \NCTD[\cC^2[G]$. Clearly $\NCTD(\cC^1[G]) \ge 1$. 
Theorem~\ref{th:dim1} implies that $\NCTD(\cC^2[G]) = 1$. 
Thus $\NCTD(\cC^1[G]) = 1$.
\end{proof}

\subsection{Classes with NCTD 1 and Logarithmic RTD} \label{subsec:gap}

In this section, we make use of the following version of the Chernoff bound:

\begin{lemma}[\cite{C1952,KVaz1994}] \label{lem:chernoff}
Let $X_1,\ldots,X_m$ be a sequence of $m$ independent Bernoulli trials,
each with probability~$p$ of success. Let $Z = X_1 +\ldots+ X_m$ be the
random variable that counts the totel number of successes 
(so that ${\mathbb{E}}[S] = pm$). Then, for $0 \le \gamma \le 1$, 
the following holds:
\begin{equation} \label{eq:chernoff}
\Pr[Z < (1-\gamma)pm] \le \exp\left(\frac{-pm\gamma^2}{2}\right) \enspace .
\end{equation}
\end{lemma}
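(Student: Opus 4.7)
The plan is to prove this lower-tail Chernoff bound by the classical exponential-moment (``Bernstein trick'') argument. For any $t>0$, the event $\{Z<(1-\gamma)pm\}$ coincides with $\{e^{-tZ}>e^{-t(1-\gamma)pm}\}$, so Markov's inequality applied to the non-negative random variable $e^{-tZ}$ gives
\[
\Pr[Z<(1-\gamma)pm] \;\le\; e^{t(1-\gamma)pm}\cdot\mathbb{E}[e^{-tZ}].
\]
Independence of $X_1,\ldots,X_m$ factors the moment generating function as $\mathbb{E}[e^{-tZ}] = \prod_{i=1}^m \mathbb{E}[e^{-tX_i}] = (1-p(1-e^{-t}))^m$, and the elementary bound $1+x\le e^x$ yields $\mathbb{E}[e^{-tZ}]\le \exp(-pm(1-e^{-t}))$. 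Combining the two gives the parameter-dependent estimate
\[
\Pr[Z<(1-\gamma)pm] \;\le\; \exp\!\bigl(pm\bigl[t(1-\gamma)-(1-e^{-t})\bigr]\bigr).
\]

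Next I would optimize the exponent in $t>0$. Differentiating with respect to $t$ and setting the result to zero produces the stationary point $e^{-t^{*}}=1-\gamma$, which is legitimate since $\gamma\in(0,1]$. Substituting $t^{*}=-\ln(1-\gamma)$ back into the exponent collapses the right-hand side to the sharp Chernoff form
\[
\Pr[Z<(1-\gamma)pm] \;\le\; \exp\!\bigl(-pm\bigl[\gamma+(1-\gamma)\ln(1-\gamma)\bigr]\bigr).
\]
The lemma is therefore reduced to the purely deterministic inequality $\gamma+(1-\gamma)\ln(1-\gamma)\ge \gamma^2/2$ on the range $0\le\gamma\le 1$.

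Finally, I would dispose of this inequality with a short calculus argument: set $h(\gamma)=\gamma+(1-\gamma)\ln(1-\gamma)-\gamma^2/2$, observe $h(0)=0$ and $h'(\gamma)=-\ln(1-\gamma)-\gamma$ with $h'(0)=0$, and note $h''(\gamma)=\gamma/(1-\gamma)\ge 0$ on $[0,1)$. Hence $h$ is convex on $[0,1)$ with both value and slope equal to zero at the origin, so $h\ge 0$ throughout, yielding the target exponent. The only point that genuinely requires care is keeping the signs consistent in the lower-tail (as opposed to the more familiar upper-tail) version of the MGF estimate, namely choosing $t>0$ but exponentiating with $-t$; apart from that, each step is routine, and I do not anticipate any real obstacle.
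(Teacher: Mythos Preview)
Your argument is correct and is exactly the standard moment-generating-function proof of the lower-tail Chernoff bound; the computation of $h''(\gamma)=\gamma/(1-\gamma)\ge 0$ together with $h(0)=h'(0)=0$ cleanly establishes the deterministic inequality $\gamma+(1-\gamma)\ln(1-\gamma)\ge\gamma^2/2$ on $[0,1)$, and the endpoint $\gamma=1$ is trivial since then $\Pr[Z<0]=0$.

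Note, however, that the paper does not supply its own proof of this lemma at all: it is quoted from the cited references (Chernoff and Kearns--Vazirani) as a black-box tool, so there is nothing in the paper to compare your proof against. Your write-up simply fills in the standard textbook derivation that the authors elected to omit.
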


\noindent 
Here is the main result of this section:

\begin{theorem} \label{th:rtd-nctd-ratio}
For all sufficiently large $n$, there exists a concept class $\cC$ 
of size $n$ which satisfies 
$\RTD(\cC) \ge \TD_{min}(\cC) \ge \lfloor \log(n) - 2\log\log(2n) \rfloor - 4$ 
and $\NCTD(\cC) = 1$. 
\end{theorem}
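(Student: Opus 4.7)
The plan is to instantiate $\cC_n := \cC^1[G_n]$ where $G_n$ is a uniformly random tournament on $[n]$ (each edge oriented independently with probability $1/2$). Then $|\cC_n|=n$, Corollary~\ref{cor2:dim1} gives $\NCTD(\cC_n)=1$ deterministically, and equation~(\ref{eq:rtd-tdmin}) gives $\RTD(\cC_n)\ge\TD_{min}(\cC_n)$; hence it suffices to show that with strictly positive probability $\TD_{min}(\cC_n)\ge d$, where $d:=\lfloor\log(n)-2\log\log(2n)\rfloor - 4$. Because any superset of a teaching set is again a teaching set, it even suffices to show that with positive probability no $D\seq[n]$ of size exactly $d-1$ is a teaching set for any $\ol{C_j}$ against $\cC_n$.

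For fixed $j\in[n]$ and fixed $D\seq[n]$ with $|D|=d-1$, I would control the probability that $D$ teaches $\ol{C_j}$ by counting ``collisions'': let
\[
Z \;=\; \bigl|\{j'\in[n]\sm(D\cup\{j\}) \,:\, \ol{C_{j'}}|_D = \ol{C_j}|_D\}\bigr|,
\]
and note that $Z\ge 1$ already prevents $D$ from being a teaching set for $\ol{C_j}$. For each admissible $j'$, the restriction $\ol{C_{j'}}|_D$ is determined by the $|D|$ edges of $G_n$ between $j'$ and $D$, and these edge-sets are pairwise disjoint across different such $j'$ and also disjoint from the edges that determine $\ol{C_j}|_D$ (even when $j\in D$, because the diagonal label $\ol{C_j}(j)=1$ is deterministic and uses no edge of $G_n$). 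Hence, conditional on $\ol{C_j}|_D$, the variable $Z$ is distributed as Binomial$(m,\,2^{-(d-1)})$ for some $m\ge n-d$, and Lemma~\ref{lem:chernoff} applied with $\gamma = 1/2$ yields $\Pr[Z=0]\le\exp(-(n-d)/2^{d+2})$. A union bound over the at most $n\binom{n}{d-1}\le n^{d}$ pairs $(j,D)$ then gives
\[
\Pr\bigl[\TD_{min}(\cC_n) < d\bigr] \;\le\; n^{d}\exp(-(n-d)/2^{d+2}).
\]

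The last step is a short arithmetic check: for $d=\lfloor\log(n)-2\log\log(2n)\rfloor - 4$ one has $2^{d+2}\le n/(4\log^2(2n))$, so $(n-d)/2^{d+2}\ge 2\log^2(2n)$ for $n$ large, whereas $\ln(n^{d}) = d\ln n \le \log^2(2n)$; the exponent in the union bound is therefore negative with linear slack in $\log^2(2n)$, and the additive constant $-4$ is precisely what produces that slack. Picking any tournament in the resulting positive-probability event yields a concept class $\cC_n$ of size $n$ with $\NCTD(\cC_n)=1$ and $\RTD(\cC_n)\ge\TD_{min}(\cC_n)\ge d$, as required. The one delicate point in the plan, beyond standard Chernoff and union-bound manipulations, is verifying the independence structure used in the Binomial step---in particular, confirming that the case $j\in D$ does not break independence because the extra coordinate $\ol{C_j}(j)=1$ carries no edge information; once that is pinned down, the rest is direct.
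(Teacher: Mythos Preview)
Your proposal is correct and follows essentially the same route as the paper: random tournament, Chernoff bound on the number of concepts matching a given labeled pattern, and a union bound over all candidate teaching sets. The only cosmetic difference is that you index the union bound by pairs $(j,D)$ (target concept and unlabeled set) whereas the paper indexes by pairs $(S,b)$ (set and bit pattern); the independence verification and arithmetic go through identically.
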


\begin{proof}
We know from Corollary~\ref{cor2:dim1} that, for every tournament $G$ 
with $n$ players, the concept class $\cC^1[G]$ (which is of size $n$) 
has NC-dimension $1$. We know already from~(\ref{eq:rtd-tdmin})
that $\RTD$ is lower-bounded by $\TD_{min}$. The proof of the theorem can therefore
be accomplished by showing that, for all sufficiently large $n$, 
there exists a tournament $G$ with $n$ players 
such that $\TD_{min}(\cC^1[G]) \ge \lfloor \log(n) - 2\log\log(2n)\rfloor - 4$. 
For this purpose, we make use of the probabilistic method. Details follow. \\
Suppose that $\hat G = ([n],\hat E)$ is a \emph{random tournament},
i.e., for every $1 \le i < j \le n$, we decide by means of a fair coin
whether $(i,j)$ or $(j,i)$ is included into $\hat E$. Consider the
class $\cC^1[\hat G]$ (the first concept class induced by $\hat G$).
Let $k \ge 1$ be a parameter whose precise definition (as a function in $n$)
is postponed to a later stage. For every set $S \seq [n]$ of size $k$
and every $b: S \ra \{0,1\}$, let $Z_{S,b}$ be the random variable
which counts how many concepts $C \in \cC^1[\hat G]$ satisfy 
\begin{equation} \label{eq:prob-method}
\forall s \in S: C(s) = b(s) \enspace .
\end{equation}
Each concept $C_i$ with $i \in [n] \sm S$ satisfies
Condition~(\ref{eq:prob-method}) with a probability of exactly $2^{-k}$.
Therefore
\[ \mathbb{E}[Z_{S,b}] \ge 2^{-k} \cdot (n-k) \enspace . \]
An application of~(\ref{eq:chernoff}) with $p = 2^{-k}$, $\gamma = 1/2$ 
and $m = n-k$ yields that, for every fixed choice of $S$ and $b$, we have
\[ 
\Pr[Z_{S,b} < 2^{-(k+1)} (n-k)] \le \exp\left(\frac{-2^{-k}(n-k)}{8}\right)
= \exp\left(2^{-(k+3)}(n-k)\right) \enspace .
\]
As there are $\binom{n}{k}2^k$ possible choices for $(S,b)$, an application
of the union bound yields
\[
\Pr[\exists (S,b): Z_{S,b} < 2^{-(k+1)} (n-k)] \le 
\binom{n}{k}2^k \cdot \exp\left(-2^{-(k+3)}(n-k)\right) \enspace .
\]
We now set $k' = \log(n) - 2\log\log(2n) - 4$ and $k = \lfloor k' \rfloor$. 
\begin{claim} \label{cl:rtd-nctd-ratio}
For all sufficiently large $n$, we have
\[ 
 2^{-(k+1)}(n-k) \ge 2\ \mbox{ and } \binom{n}{k}2^k \cdot 
\exp\left(-2^{-(k+3)}(n-k)\right) < 1 \enspace .
\]
\end{claim}
\begin{description}
\item[Proof of the Claim:]
For almost all $n$, we have $n-k \ge n/2$. It therefore suffices to show 
that, for all sufficiently large $n$, we have
\[ 
2^{-(k+2)}n \ge 2\ \mbox{ and } 
\binom{n}{k}2^k \cdot \exp\left(-2^{-(k+4)}n\right) < 1 \enspace .
\]
The first inequality is valid because $k \le k' < \log(n)-3$.
After replacing $\binom{n}{k}$ by $n^k$ and taking the logarithm
on both hand-sides, we obtain the following sufficient condition
for the second inequality:
\[ k \log(2n) - 2^{-(k+4)}n < 0 \enspace . \] 
By the above choice of $k$, we have 
\[
k \log(2n) \le k' \log(2n) < \log^2(2n)\ \mbox{ and } \ 
2^{-(k+4)}n \ge 2^{-(k'+4)}n \ge \log^2(2n) \enspace ,
\] 
which completes the proof of the claim.
\end{description}
It follows that there is
a strictly positive probability for the event that, for each pair $(S,b)$,
at least $2$ concepts from $\cC^1[G]$ satisfy condition~(\ref{eq:prob-method}).
Since $S$ ranges over all subsets of $[n]$ of size $n$ and $b$ ranges
over all bit patterns from $\{0,1\}^k$, we can draw the following
conclusion: there exists a tournament $G$ with $n$ players such that
none of the concepts in $\cC := \cC^1[G]$ can be uniquely specified by $k$
(or less) labeled examples. This clearly implies that $\TD_{min}(\cC) \ge k$.
\end{proof}

With a little extra-effort, one can show that only an asymptotically
vanishing fraction of tournaments induces concept classes whose $\TD_{min}$
is upper bounded by $\lfloor \log(n) - 2\log\log(2n) \rfloor - 5$.
Hence almost all of these classes are hard to teach in the RTD-model.
More precisely, the following holds:

\begin{corollary} \label{cor:rtd-nctd-ration}
Let $\tau_n$ denote the fraction of tournaments $G = ([n],E)$ 
such that $\TD_{min}(\cC^1[G]) \le \lfloor \log(n) - 2\log\log(2n) \rfloor - 5$.
Then, for all sufficiently large $n$, 
we have that 
\begin{equation} \label{eq:tau-bound}
 \tau_n \le \frac{1}{(2n)^{\log(2n)}} \enspace .
\end{equation}
\end{corollary}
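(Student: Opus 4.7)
The plan is to replay the probabilistic argument from Theorem~\ref{th:rtd-nctd-ratio} with the parameter decremented by one, and then extract a quantitative bound on the failure probability rather than merely the existence statement. Concretely, I would set $k = \lfloor \log(n) - 2\log\log(2n) \rfloor - 5$ and let $\hat G$ again denote a uniformly random tournament on $[n]$. The event $\TD_{min}(\cC^1[\hat G]) \le k$ implies (by extending a minimal witness to a set of size exactly $k$) that some $(S, b)$ with $|S| = k$ satisfies $Z_{S,b} = 1$, and hence is contained in the Chernoff tail event $\{Z_{S,b} < 2^{-(k+1)}(n-k)\}$ once the threshold exceeds $2$; this latter condition holds for large $n$ by exactly the calculation used in Claim~\ref{cl:rtd-nctd-ratio}.

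Invoking Lemma~\ref{lem:chernoff} with $\gamma = 1/2$, $p = 2^{-k}$, $m = n-k$ and then taking a union bound over the $\binom{n}{k}2^k$ choices of $(S,b)$ yields
\[
\tau_n \;\le\; \binom{n}{k}2^k \cdot \exp\!\left(-2^{-(k+3)}(n-k)\right) \enspace .
\]
The remaining task is to show that the right-hand side is at most $(2n)^{-\log(2n)} = 2^{-\log^2(2n)}$ for all sufficiently large $n$. After taking $\log_2$, this reduces to verifying
\[
k \log(2n) + \log^2(2n) \;\le\; \frac{2^{-(k+3)}(n-k)}{\ln 2} \enspace ,
\]
which I would do by bounding the left-hand side from above by $2\log^2(2n)$ (using $k \le \log(2n)$ and hence $\binom{n}{k}2^k \le 2^{\log^2(2n)}$) and the right-hand side from below by $(2/\ln 2)\log^2(2n) \approx 2.88\log^2(2n)$ (using the choice of $k$, which guarantees $2^{-k} \ge 32\log^2(2n)/n$ and, for large $n$, $n-k \ge n/2$).

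The single delicate point is the calibration of the constant $-5$ (as opposed to the theorem's $-4$): this shift contributes an extra factor of $2$ in the Chernoff exponent $2^{-(k+3)}(n-k)$, and I expect it to be exactly what is needed to absorb both the $\binom{n}{k}2^k$ union-bound term and the additional $(2n)^{-\log(2n)}$ slack, which is possible precisely because $2/\ln 2 > 2$. Apart from this bookkeeping, every step of the argument is an immediate re-use of the machinery already assembled in the proof of Theorem~\ref{th:rtd-nctd-ratio}, so I do not anticipate any genuinely new obstacle.
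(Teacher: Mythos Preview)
Your proposal is correct and follows essentially the same route as the paper: decrement the parameter to $k = \lfloor \log(n) - 2\log\log(2n)\rfloor - 5$, rerun the Chernoff-plus-union-bound calculation from Theorem~\ref{th:rtd-nctd-ratio}, and verify that the resulting failure probability is below $(2n)^{-\log(2n)}$. The paper's bookkeeping differs only cosmetically---it drops the $1/\ln 2$ factor and closes the gap via the strict inequality $k'\log(2n) < \log^2(2n)$ rather than via $2/\ln 2 > 2$---so your reliance on $2/\ln 2 > 2$ is valid but not actually essential.
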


\begin{proof}
We use the probabilistic method thereby proceeding almost as in the proof 
of Theorem~\ref{th:rtd-nctd-ratio}. In the sequel, we stress the
differences to that proof: 
\begin{itemize}
\item
We set $k' = \log(n) - 2\log\log(2n) - 5$ and $k = \lfloor k' \rfloor$. 
\item
We have to show that\footnote{Compare with Claim~\ref{cl:rtd-nctd-ratio}.}
\begin{equation} \label{eq:rtd-nctd-ratio}
2^{-(k+2)}n \ge 2\ \mbox{ and }
\binom{n}{k}2^k \cdot \exp\left(-2^{-(k+4)}n\right) < \frac{1}{(2n)^{\log(2n)}}
\end{equation}
holds for all sufficiently large $n$.
\end{itemize}
The first inequality is immediate from the choice of $k$. As for the second 
inequality\footnote{Compare with the proof of Claim~\ref{cl:rtd-nctd-ratio}.}, 
it suffices to show that
\[ 
k \log(2n) - 2^{-(k+4)}n < \log\left(\frac{1}{(2n)^{\log(2n)}}\right) =
- \log^2(2n) \enspace . \]
By the above choice of $k$, we have
\[
k \log(2n) \le k' \log(2n) < \log^2(2n)\ \mbox{ and } \
2^{-(k+4)}n \ge 2^{-(k'+4)}n \ge 2\log^2(2n) \enspace ,
\]
which completes the proof of~(\ref{eq:rtd-nctd-ratio}). 
From these findings, it is easy to deduce~(\ref{eq:tau-bound}).\footnote{Compare
with the end of the proof of Theorem~\ref{th:rtd-nctd-ratio}.}
\end{proof}

\section{No-Clash Teaching Sets and their Relation to Johnson Graphs}
\label{sec:johnson-graphs}

In Section~\ref{subsec:johnson-graphs}, we call into mind the definition of Johnson
graphs (and related notions) along with some facts (all of which are well known
and also easy to verify). In Section~\ref{subsec:glb}, the tools from 
Section~\ref{subsec:johnson-graphs} are used to improve the upper 
bound $2^d\binom{n}{d}$ on $M_{NC}(n,d)$ by a factor of order $\sqrt{d}$.

\subsection{Johnson Graphs and their Subgraphs} 
\label{subsec:johnson-graphs}

\begin{definition}[Johnson graph]
        Let $J(n,k)$ denote the graph with vertex set $\cP_k([n])$
        and an edge between $A,B \in \cP_k([n])$ iff $|A \cap B| = k-1$.
        The graphs $J(n,k)$ with $1 \le k \le n$ are called
        {\em Johnson graphs}\footnote{named after the former
        American mathematician Selmer M.~Johnson}.
        A clique $\cK\seq\cP_k(n)$ in $J(n,k)$ is said to be
        {\em wide} if the sets in $\cK$ have a common intersection 
        of size $k-1$. Analogously, $\cK$ is said to be {\em narrow} if 
        the union of all sets in $\cK$ has size $k+1$.
\end{definition}

\begin{description}
\item[Warning:]
The distinction between wide and narrow cliques would be blured
if we represented the $N = \binom{n}{k}$ vertices simply by 
numbers $1,\ldots,N$. In what follows, the representation of
the $N$ vertices by $k$-subsets of $[n]$ is quite essential.
\end{description}

Note that $J(n,1)$ is isomorphic to the complete graph $K_n$.
The vertices $\{1\},\ldots,\{n\}$ of $J(n,1)$ form a wide clique.
$J(n,2)$ is isomorphic to the line graph $L(K_n)$.
$J(k,k)$ is a graph with a single vertex $[k]$ (and no edges).
$J(k+1,k)$ is isomorphic to $K_{k+1}$. The $k+1$ vertices
of $J(k+1,k)$ form a narrow clique. Cliques of size $2$ in $J(n,k)$
are wide and narrow. Cliques of size $3$ or more cannot be wide
and narrow at the same time. A clique of size $3$ is also called
{\em triangle} in the sequel. Here are some more of the known 
(and easy-to-check) facts concerning Johnson graphs:

\begin{lemma} \label{lem:johnson}
        \begin{enumerate}
                \item Distinct sets in $\cP_k([n])$ with a common
                        intersection of size $k-1$ (resp.~a union
                        of size $k+1$) must necessarily form a clique.
                \item Any clique in $J(n,k)$ is wide or narrow.
                \item The mapping $A \mapsto [n] \sm A$ is a graph
                        isomorphism between $J(n,k)$ and $J(n,n-k)$.
                        This isomorphism transforms narrow cliques
                        into wide cliques, and vice versa.
        \end{enumerate}
\end{lemma}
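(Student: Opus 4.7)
The plan is to prove the three parts of Lemma~\ref{lem:johnson} in the order (1), (3), (2), using (1) and (3) to streamline the argument for (2).

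For part (1), I would observe that if $A_1, A_2, \ldots$ are distinct $k$-subsets of $[n]$ with a common intersection $I$ of size $k-1$, then each $A_i$ must equal $I \cup \{a_i\}$ for some $a_i \notin I$, and distinctness forces the $a_i$ to be pairwise distinct. Hence $|A_i \cap A_j| = |I| = k-1$ for all $i \neq j$, so the sets form a clique. The union case is dual: if the union has size $k+1$, each $A_i$ equals $U \setminus \{u_i\}$ for some $u_i \in U$, whence $|A_i \cap A_j| = |U| - 2 = k-1$.

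For part (3), the map $\phi: A \mapsto [n] \setminus A$ is clearly a bijection from $\cP_k([n])$ onto $\cP_{n-k}([n])$. For any two $k$-subsets $A, B$, the identity $|A \cap B| + |A \cup B| = 2k$ together with $|\phi(A) \cap \phi(B)| = n - |A \cup B|$ yields: $|A \cap B| = k-1$ iff $|\phi(A) \cap \phi(B)| = (n-k)-1$. So $\phi$ is a graph isomorphism $J(n,k) \ra J(n,n-k)$. Moreover, a clique $\cK$ in $J(n,k)$ has common intersection $I$ of size $k-1$ iff the image $\phi(\cK)$ has common union $[n] \sm I$ of size $n-(k-1) = (n-k)+1$, so wide cliques map to narrow ones and conversely.

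For part (2), cliques of size at most $2$ are trivially both wide and narrow. For a triangle $\{A, B, C\}$, write $A \cap B = I$ of size $k-1$ and let $A = I \cup \{a\}$, $B = I \cup \{b\}$ with $a, b \notin I$ and $a \neq b$. If $I \seq C$, then $C = I \cup \{c\}$ and the triangle is wide with common intersection $I$. Otherwise $|C \cap I| \le k-2$; the condition $|C \cap A| = k-1$ then forces $a \in C$ and $|C \cap I| = k-2$, and symmetrically $b \in C$, so $C = (I \sm \{x\}) \cup \{a,b\}$ for some $x \in I$, giving $A \cup B \cup C = I \cup \{a,b\}$ of size $k+1$ — a narrow triangle. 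For a clique $\cK$ of size at least $4$, fix any triangle inside it. If this triangle is wide with intersection $I$, I would show every further vertex $D \in \cK$ must contain $I$: otherwise the dichotomy above applied to $\{A, B, D\}$ forces $D = (I \sm \{x\}) \cup \{a,b\}$, and then a direct check of $|D \cap C|$ yields $k-2$, contradicting the clique property. Hence $\cK$ is wide. If instead the triangle is narrow, I would invoke part (3) to transfer to a wide clique in $J(n,n-k)$ and transport the conclusion back. The main obstacle is the triangle case analysis; once it is in hand, both the extension to larger cliques and the narrow case (via the complement isomorphism) drop out.
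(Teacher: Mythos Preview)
Your proof is correct and complete. The paper does not actually supply a proof of this lemma; it is introduced as one of the ``known (and easy-to-check) facts concerning Johnson graphs'' and stated without argument, so there is no approach in the paper to compare against. Your write-up fills in exactly the kind of verification the paper leaves to the reader, and the device of handling the narrow case of part~(2) via the complement isomorphism from part~(3) is a clean way to avoid repeating the case analysis.
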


For any $\cF \seq \cP_k(n)$, we denote by $\Span{\cF}$ the subgraph
of $J(n,k)$ induced by $\cF$. We denote the subgraph relation by ``$\le$''
(e.g., $\Span{\cF} \le J(n,k)$). The following observation is rather 
obvious:
\begin{lemma} \label{lem:johnson-n2}
        \begin{enumerate}
                \item A graph with edge set $\cF \seq \cP_2([n])$ contains 
                      a triangle iff $\Span{\cF} \le J(n,2)$ contains 
                      a narrow triangle.
                \item A graph with edge set $\cF \seq \cP_2([n])$ contains 
                      a vertex of degree $c$ or more iff $\Span{\cF} \le J(n,2)$
                        contains a wide clique of size $c$.
        \end{enumerate}
\end{lemma}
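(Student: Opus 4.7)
The plan is to unpack the definitions of \emph{narrow} and \emph{wide} cliques in $J(n,2)$ (where $k=2$, so narrow means union of size $3$ and wide means common intersection of size $1$) and translate them directly into statements about the graph $H=([n],\cF)$.

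For part (1), I would argue both directions by a direct correspondence. If $H$ contains a triangle on vertices $a,b,c$, then the three 2-sets $\{a,b\},\{b,c\},\{a,c\}\in\cF$ pairwise intersect in exactly one element, so by Lemma~\ref{lem:johnson}(1) they form a clique in $J(n,2)$; their union $\{a,b,c\}$ has size $k+1=3$, so this clique is narrow. Conversely, if $\Span{\cF}\le J(n,2)$ contains a narrow triangle, then three 2-sets of $\cF$ have union of size $3$, say $\{a,b,c\}$; the only possibility is that these three sets are exactly $\{a,b\},\{b,c\},\{a,c\}$, which form a triangle in $H$.

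For part (2), I would proceed analogously. If $v$ is a vertex of $H$ of degree at least $c$, then there exist distinct $u_1,\ldots,u_c\in[n]\sm\{v\}$ such that $\{v,u_i\}\in\cF$ for $i=1,\ldots,c$. These $c$ 2-sets share the common element $v$, so (by Lemma~\ref{lem:johnson}(1)) they form a clique in $J(n,2)$, and this clique is wide since $\{v\}$ has size $k-1=1$. Conversely, a wide clique of size $c$ in $\Span{\cF}$ consists of $c$ 2-sets from $\cF$ all sharing a common element $v$; these must have the form $\{v,u_1\},\ldots,\{v,u_c\}$ for distinct $u_i$, which witness that $v$ has degree at least $c$ in $H$.

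Neither direction poses any real obstacle; the entire argument is a bookkeeping exercise once the definitions of narrow and wide cliques are specialized to $k=2$. The only point worth flagging explicitly is the ``Warning'' in the paper: the correspondence relies on representing vertices of $J(n,2)$ as 2-subsets of $[n]$ rather than as abstract labels, since the notions ``narrow'' and ``wide'' depend on this representation.
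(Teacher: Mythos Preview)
Your proposal is correct; it is exactly the routine unpacking of the definitions of narrow and wide cliques for $k=2$ that the paper has in mind. The paper itself does not give a proof of this lemma---it is introduced as ``rather obvious'' and left to the reader---so your argument simply fills in what the paper omits.
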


We now fix some notation. The size of the largest $\cF \seq \cP_k(n)$
such that $\Span{\cF} \le J(n,k)$ does not contain a narrow $(t+1)$-clique is 
denoted by $H_t(n,k)$. 
Moreover, we set $h_t(n,k) = \binom{n}{k}^{-1} \cdot H_t(n,k)$.
For any $\cF \seq \cP_k([n])$ and $I\seq[n]$, we define
\[ \cF_{+I}  = \{J \in \cF | J \seq I\} \enspace . \]
Note that any (narrow or wide) clique in $\cF_{+I}$ would be a clique
of the same type and size within $\cF$. Hence, if $\Span{\cF}$ does
not contain a narrow $(t+1)$-clique, then the same holds for $\cF_{+I}$.
Given these notations and observations, the following holds:

\begin{lemma} \label{lem:gub}
For all $1 \le t \le k \le n-2$:
\begin{equation} \label{eq:gub}
  h(k,k) = 1\ \mbox{ and }\ h_t(n,k) \le h_t(n-1,k) \le h_t(k+1,k) = \frac{t}{k+1} 
\enspace .
\end{equation}
\end{lemma}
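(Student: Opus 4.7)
The plan is to verify the three assertions $h_t(k,k)=1$, $h_t(n,k)\le h_t(n-1,k)$ (for $n\ge k+2$), and $h_t(k+1,k)=\frac{t}{k+1}$ separately, and then chain the monotonicity step to obtain the full display. The identity $h_t(k,k)=1$ is immediate: $J(k,k)$ has a single vertex $[k]$ and no edges, so it trivially has no narrow $(t+1)$-clique, whence $H_t(k,k)=1=\binom{k}{k}$. For the right endpoint $h_t(k+1,k)=\frac{t}{k+1}$, I would use that $J(k+1,k)$ is the complete graph on the $k+1$ vertices $[k+1]\sm\{i\}$ and that the union of any two distinct such vertices is all of $[k+1]$; hence \emph{every} subfamily of size at least $2$ is automatically a narrow clique. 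So avoiding a narrow $(t+1)$-clique is the same as having size at most $t$, giving $H_t(k+1,k)=t$ and, after division by $\binom{k+1}{k}=k+1$, the stated value.

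The core of the lemma is the monotonicity $h_t(n,k)\le h_t(n-1,k)$, which I would handle by a standard double-counting argument. Fix a witnessing family $\cF\seq\cP_k([n])$ with $|\cF|=H_t(n,k)$ and no narrow $(t+1)$-clique. For each $i\in[n]$, the restriction $\cF_{+([n]\sm\{i\})}$ is a subfamily of $\cP_k([n]\sm\{i\})$; identifying $[n]\sm\{i\}$ with $[n-1]$ turns this into a subfamily of $\cP_k([n-1])$ whose induced subgraph of $J(n-1,k)$ still avoids a narrow $(t+1)$-clique (since narrowness is defined purely through the union of the clique's members). Hence $|\cF_{+([n]\sm\{i\})}|\le H_t(n-1,k)$ for every $i$. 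Counting pairs $(J,i)$ with $J\in\cF$ and $i\notin J$ yields
\[
\sum_{i=1}^{n}|\cF_{+([n]\sm\{i\})}| \;=\; (n-k)\,|\cF| \;\le\; n\cdot H_t(n-1,k),
\]
and, dividing both sides by $\binom{n}{k}$ while using $\binom{n}{k}=\frac{n}{n-k}\binom{n-1}{k}$, this rearranges exactly to $h_t(n,k)\le h_t(n-1,k)$. Iterating from $n$ down to $k+1$ produces the displayed chain.

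There is no serious obstacle; the only point that deserves a careful sanity check is that the property ``contains no narrow $(t+1)$-clique'' is intrinsic to the family---it depends only on the intersection sizes of pairs and on the union sizes of sub-cliques---so it behaves correctly both under the passage to $\cF_{+I}$ and under the canonical identification of $J(n-1,k)$ with the subgraph of $J(n,k)$ induced by the $k$-subsets missing a fixed element. Once this bookkeeping is in place, the three pieces fit together with only the elementary binomial identity $\binom{n}{k}(n-k)=n\binom{n-1}{k}$.
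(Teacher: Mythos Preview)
Your proposal is correct and follows essentially the same approach as the paper: the endpoint identities are handled identically, and the monotonicity step is the same averaging argument (the paper phrases it via pigeonhole---pick one $i$ occurring in at most $\frac{k}{n}H_t(n,k)$ sets---whereas you sum over all $i$, but these are equivalent formulations yielding the same inequality $(n-k)H_t(n,k)\le n\,H_t(n-1,k)$).
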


\begin{proof}
$J(k,k)$ is a graph consisting of a single isolated vertex and $J(k+1,k)$ 
is a narrow clique of size $k+1$. Hence $H_t(k,k)=1$ and $H_t(k+1,k)=t$,
which implies that $h_t(k,k) = 1$ and $h_t(k+1,k) = \frac{t}{k+1}$. 
The proof can now be accomplished by showing 
that $h_t(n,k) \le h_t(n-1,k)$.\footnote{This inequality is, in principle,
known from~\cite{KNS1964}. The proof in~\cite{KNS1964} is written in Hungarian
and it is formulated for hereditary properties of hypergraphs: 
if we view $\cF \seq \cP_k(n)$ as a set of $k$-uniform hyperedges, then not 
containing $t+1$ hyperedges whose union is of size $k+1$ will become a hereditary 
hypergraph property. In our application of this result, it is however more intuitive 
to view the elements of $\cF$ as vertices of the Johnson graph. In order to
make this paper more self-contained, we therefore included the short proof
for $h_t(n,k) \le h_t(n-1,k)$, which uses a simple averaging argument.}
Fix a family $\cF \seq \cP_k([n])$ of size $H_t(n,k)$ such 
that $\Span{\cF} \le J(n,k)$ does not contain a narrow $(t+1)$-clique..
There are $k \cdot H_t(n,k)$ occurrences of elements from $[n]$
within the sets of $\cF$. By the pigeon-hole principle,
there exists an $i \in [n]$ that occurs in
at most $\frac{k}{n} \cdot H_t(n,k)$ sets of $\cF$.
Set $I = [n]\sm\{i\}$. It follows that
\[
H_t(n-1,k) \ge |\cF_{+I}| \ge
\left(1-\frac{k}{n}\right)H_t(n,k) \enspace .
\]
Hence
\[
h_t(n,k) \le
\frac{n}{n-k} \binom{n}{k}^{-1} \binom{n-1}{k} h_t(n-1,k) =
h_t(n-1,k) \enspace .
\]
\end{proof}

We briefly note that the proof of the $h(n,k) \le h(n-1,k)$ made use 
only of the fact that the feature of avoiding a narrow $(t+1)$-clique
is inherited from $\cF$ to $\cF_{+I}$. Hence the same monotonicity
is valid whenever this kind of inheritance is granted.

\smallskip\noindent
The parameter $h_2(n,2)$ can be determined exactly:

\begin{remark} \label{rem:triangle-free}
For every $n \ge 2$, we have $h_2(n,2) \le \frac{n}{2(n-1)}$.
Moreover, this holds with equality if $n$ is even.
\end{remark}

\begin{proof}
According to Mantel's theorem~(\cite{M1907}) --- in a more general
form known as Turan's theorem~(\cite{T1941}) --- any triangle-free graph
has at most $n^2/4$ edges. For even $n$, this bound is tight because the 
complete bipartite graph $K_{n/2,n/2}$ is triangle free and has $n^2/4$ edges.
In combination with Lemma~\ref{lem:johnson-n2}, we may conclude
that $H_2(n,2) \le n^2/4$, and this holds with equality if $n$ is even. 
Hence $h_2(n,2) \le \binom{n}{2}^{-1} \cdot \frac{n^2}{4} = \frac{n}{2(n-1)}$,
again with equality if $n$ is even.
\end{proof}

\subsection{New Bounds on the Size of NC-Maximum Classes} 
\label{subsec:glb}

Let $\cC$ be a concept class over $[n]$ such that $\NCTD(\cC) = d$,
as witnessed by an NC-teacher $T: \cC \ra \cP_d(n)$.
Let $\cF = \{T(C): C \in \cC\} \seq \cP_d(n)$ be the family of all teaching 
sets assigned by $T$ to the concepts of $\cC$. For every $F \in \cF$, 
let $1 \le m(F) \le 2^d$ denote the number of concepts $C \in \cC$ 
with $T(C) = F$. Clearly $|\cC| = \sum_{F \in \cF}m(F)$. 
For every $2 \le t \le d$, we define
\begin{equation} \label{eq:heavy-teaching-sets}
\cF_t = \left\{F \in \cF: m(F) > \frac{2^{d+1}}{t+1}\right\} \enspace . 
\end{equation}
We view the sets in $\cF_t$ as vertices in the Johnson graph $J(n,d)$ 
so that $\Span{\cF_t}$ denotes the subgraph of $J(n,d)$ induced by $\cF_t$. 
With these notations, the following holds:

\begin{lemma} \label{lem:no-narrow-clique}
The graph $\Span{\cF_t} \le J(n,d)$ does not contain a narrow $(t+1)$-clique.
\end{lemma}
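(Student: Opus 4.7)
The plan is to argue by contradiction: assume a narrow $(t+1)$-clique in $\Span{\cF_t}$ exists and derive a clash, violating the NC-teacher property.

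First I would unpack what a narrow $(t+1)$-clique means: by definition, there are distinct teaching sets $F_1,\ldots,F_{t+1}\in\cF_t$ whose union $U:=F_1\cup\cdots\cup F_{t+1}$ has size exactly $d+1$, so each $F_i$ is obtained from $U$ by removing a single element. In particular, $F_i\cup F_j\subseteq U$ for all $i,j$. Next, for each $i$, let $\cC_i=\{C\in\cC:T(C)=F_i\}$, so $|\cC_i|=m(F_i)>\frac{2^{d+1}}{t+1}$ by the defining condition~(\ref{eq:heavy-teaching-sets}). Set $\cC':=\cC_1\cup\cdots\cup\cC_{t+1}$ (a disjoint union since the $F_i$ are distinct); summing the lower bounds gives
\[
|\cC'|=\sum_{i=1}^{t+1}m(F_i)> (t+1)\cdot\frac{2^{d+1}}{t+1}=2^{d+1}.
\]

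The key observation is that any two distinct concepts $C,C'\in\cC'$ must disagree somewhere inside $U$. Indeed, if $C,C'$ both lie in the same $\cC_i$, then $T(C)=T(C')=F_i\subseteq U$ and the NC property forces them to differ on some $x\in F_i\subseteq U$; if instead $C\in\cC_i$ and $C'\in\cC_j$ with $i\ne j$, then the NC property yields an $x\in T(C)\cup T(C')=F_i\cup F_j\subseteq U$ on which they differ. Consequently, the restriction map $\cC'\to\{0,1\}^U$, $C\mapsto C|_U$, is injective. But $|\{0,1\}^U|=2^{|U|}=2^{d+1}<|\cC'|$, contradicting injectivity.

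This contradiction shows $\Span{\cF_t}$ contains no narrow $(t+1)$-clique, finishing the proof. I do not anticipate a real obstacle: the argument is essentially a pigeonhole once one recognizes that a narrow clique packs all the participating teaching sets into a common $(d+1)$-element universe $U$, so the NC condition forces every pair of concepts in $\cC'$ to be separated inside $U$. The only subtle point is the strict inequality in~(\ref{eq:heavy-teaching-sets}), which is precisely what allows $|\cC'|$ to exceed $2^{d+1}$ and trigger the pigeonhole collision.
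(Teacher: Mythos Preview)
Your proof is correct and follows essentially the same route as the paper: assume a narrow $(t+1)$-clique, note that the union $U$ of its sets has size $d+1$, count more than $2^{d+1}$ concepts whose teaching sets lie in the clique, and apply pigeonhole on restrictions to $U$ to produce a clash. The paper's argument is slightly terser (it invokes pigeonhole directly to find two concepts coinciding on $U$ rather than phrasing it via injectivity of the restriction map), but the logic is identical.
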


\begin{proof}
Assume for contradiction that $\Span{\cF_t}$ does contain 
a narrow $(t+1)$-clique $\cK$, say $\cK = \{F_1,\ldots,F_{t+1}\} \seq \cF_t$. 
Set $D = F_1 \cup\ldots\cup F_{t+1} \seq [n]$. The definition of a narrow
clique in $J(n,d)$ implies hat $|D| = d+1$. From the definition of $\cF_t$,
we may infer that $m(F_1) +\ldots+ m(F_{t+1}) > 2^{d+1}$. Thus $\cC$ contains 
more than $2^{d+1}$ concepts $C$ whose teaching set $T(C)$ belongs to $\cK$.
By the pidgeon-hole principle, there must be two distinct concepts $C_1$
and $C_2$ such that $T(C_1),T(C_2) \in \cK$ and $C_1$ and $C_2$ coincide
on $D$. But, since $T(C_1) \cup T(C_2) \seq D$, this means that $C_1$ 
and $C_2$ clash with respect to $T$. We arrived at a contradiction.
\end{proof}

We are now finally in the position to prove the (previously announced)
improved upper bound on $M_{NC}(n,d)$:

\begin{theorem} \label{th:gub}
For $2 \le t \le d \le n$, the following holds:
\[
M_{NC}(n,d) \le H_t(n,d)2^d + \left(\binom{n}{d}-H_t(n,d)\right)\frac{2^{d+1}}{t+1} 
= \left(h_t(n,d) + (1-h_t(n,d))\frac{2}{t+1}\right) \cdot 2^d \binom{n}{d}
\enspace .
\]
Moreover, for $t = \lfloor \sqrt{2(d+1)} \rfloor$, one gets
\begin{equation} \label{eq:improved-sauer-bound}
M_{NC}(n,d) \le \left(2 \sqrt{\frac{2}{d+1}} - \frac{2}{d+1}\right) \cdot 2^d \binom{n}{d}
\enspace . 
\end{equation}
\end{theorem}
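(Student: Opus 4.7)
The plan is to work in the framework already laid out just before the statement. I would fix an NC-teacher $T:\cC\ra\cP_d([n])$ of order $d$ for a putative NC-maximum class $\cC$, form the family $\cF=\{T(C):C\in\cC\}\seq\cP_d([n])$ together with the multiplicities $m(F)\in\{1,\dots,2^d\}$, and split $\cC$ according to whether the teaching set $T(C)$ is ``heavy'' (i.e.\ lies in $\cF_t$ as defined in~(\ref{eq:heavy-teaching-sets})) or ``light''. On heavy sets I use only the trivial bound $m(F)\le 2^d$, and on light sets the definition gives $m(F)\le 2^{d+1}/(t+1)$. This yields
\[
|\cC|\;\le\;|\cF_t|\cdot 2^d + (|\cF|-|\cF_t|)\cdot\frac{2^{d+1}}{t+1}.
\]

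For the first displayed inequality in the theorem, I would combine two observations. First, $|\cF|\le\binom{n}{d}$ (since $\cF\seq\cP_d([n])$), and the right-hand side above is non-decreasing in $|\cF_t|$ because its $|\cF_t|$-coefficient is $2^d(t-1)/(t+1)\ge 0$ for $t\ge 2$. Second, Lemma~\ref{lem:no-narrow-clique} gives $|\cF_t|\le H_t(n,d)$. Substituting the extremal value $|\cF_t|=H_t(n,d)$ yields exactly the stated sum, and dividing by $2^d\binom{n}{d}$ produces the rewritten form involving $h_t(n,d)$.

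For the second inequality I would set $t=\lfloor\sqrt{2(d+1)}\rfloor$ and abbreviate $\alpha=\sqrt{2/(d+1)}$, so the target reduces to showing
\[
h_t(n,d)+\bigl(1-h_t(n,d)\bigr)\cdot\frac{2}{t+1}\;\le\;2\alpha-\alpha^2.
\]
Two one-line estimates suffice. From $t\le\sqrt{2(d+1)}$, Lemma~\ref{lem:gub} gives $h_t(n,d)\le t/(d+1)\le\alpha$. From $t+1>\sqrt{2(d+1)}$, I get $2/(t+1)<\alpha$. Since $2/(t+1)\le 1$, the map $(x,y)\mapsto x+(1-x)y$ is jointly non-decreasing in $x,y\in[0,1]$, so I may replace both $h_t(n,d)$ and $2/(t+1)$ by $\alpha$ to obtain the bound $\alpha+(1-\alpha)\alpha=2\alpha-\alpha^2$.

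The main obstacle is getting the two-sided floor estimate $\sqrt{2(d+1)}-1<t\le\sqrt{2(d+1)}$ to yield the \emph{same} control $\alpha$ on both $h_t(n,d)$ and $2/(t+1)$ simultaneously; this is what makes the choice $t=\lfloor\sqrt{2(d+1)}\rfloor$ work cleanly and collapse the expression to $\alpha(2-\alpha)$. Everything else is essentially bookkeeping: one should also verify in passing that the selected $t$ satisfies $2\le t\le d$ (needed to invoke the first part of the theorem), which holds for all sufficiently large $d$, and the small residual cases are absorbed by the earlier bound $2^d\binom{n}{d}$ of Theorem~\ref{th:KSZ-bound}.
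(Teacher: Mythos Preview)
Your proposal is correct and follows essentially the same route as the paper: split $\cC$ into heavy and light teaching sets via $\cF_t$, bound $|\cF_t|$ by $H_t(n,d)$ using Lemma~\ref{lem:no-narrow-clique}, then invoke $h_t(n,d)\le t/(d+1)$ from Lemma~\ref{lem:gub} and optimize with $t=\lfloor\sqrt{2(d+1)}\rfloor$. Your monotonicity argument via $\alpha=\sqrt{2/(d+1)}$ is just a slightly more explicit rendering of the paper's final chain of inequalities, and your caveat about verifying $2\le t\le d$ is in fact unnecessary since this holds for every $d\ge 2$.
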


\begin{proof}
Let $\cC$ be an NC-maximum class for parameters $n$ and $d$.
Then $|\cC| = M_{NC}(n,d)$.
Consider an NC-teacher $T:\cC\ra\cP_d(n)$ for~$\cC$. 
Let $\cF = \{T(C): C \in \cC\}$ and let $\cF_t \seq \cF$ be as 
defined in~(\ref{eq:heavy-teaching-sets}).
Lemma~\ref{lem:no-narrow-clique} implies that $\cF_t \seq \cP_d(n)$
is of size at most $H_t(n,d)$. The size of $\cC$ can therefore be
bounded as follows:
\[
|\cC| = \sum_{F \in \cF}m(F) =
\sum_{F \in \cF_t}m(F) + \sum_{F \notin \cF_t}m(F) \le
H_t(n,d)2^d + \left(\binom{n}{d}-H_t(n,d)\right) \frac{2^{d+1}}{t+1} 
\enspace .
\]
From this and $h_t(n,d) = \binom{n}{d}^{-1} \cdot H_t(n,d)$, we 
immediately obtain
\[ 
|C| \le \left(h_t(n,d) + (1-h_t(n,d))\frac{2}{t+1}\right) \cdot 2^d\binom{n}{d} 
\enspace .
\]
According to Lemma~\ref{lem:gub}, we have $h_t(n,d) \le \frac{t}{d+1}$.
Moreover, we may set  $t = \lfloor \sqrt{2(d+1)} \rfloor$ and can then
proceed as follows:
\[ 
h_t(n,d) + (1-h_t(n,d))\frac{2}{t+1} \le 
\frac{t}{d+1} + \left(1-\frac{t}{d+1}\right)\frac{2}{t+1} \le 
2 \sqrt{\frac{2}{d+1}} - \frac{2}{d+1}
\enspace .
\]
Putting everything together, we obtain~(\ref{eq:improved-sauer-bound}).
\end{proof}

\noindent
A simple computation shows that 
\[ 
2 \sqrt{\frac{2}{d+1}} - \frac{2}{d+1} \le 1
\] 
with equality for $d=1$ only. Hence the following holds:

\begin{corollary}
For $2 \le d \le n$, we have that $M_{NC}(n,d) < 2^d \binom{n}{d}$.
\end{corollary}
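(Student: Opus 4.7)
The plan is to derive this corollary as an immediate consequence of the bound already established in Theorem~\ref{th:gub}, which gives
\[
M_{NC}(n,d) \le \left(2\sqrt{\tfrac{2}{d+1}} - \tfrac{2}{d+1}\right) \cdot 2^d \binom{n}{d} \enspace .
\]
So all that remains is the elementary numerical verification that the leading factor is strictly less than $1$ precisely when $d \ge 2$, and equals $1$ when $d = 1$.

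The natural way to carry this out is to substitute $x = \sqrt{\frac{2}{d+1}}$, which rewrites the factor as $2x - x^2 = 1 - (1 - x)^2$. Since $(1-x)^2 \ge 0$ with equality iff $x = 1$, we immediately get $2x - x^2 \le 1$ with equality iff $x = 1$, i.e.\ iff $\frac{2}{d+1} = 1$, i.e.\ iff $d = 1$. For $d \ge 2$ we have $\frac{2}{d+1} < 1$, hence $x < 1$, hence $(1-x)^2 > 0$, hence $2x - x^2 < 1$. Combining with the bound from Theorem~\ref{th:gub} yields $M_{NC}(n,d) < 2^d\binom{n}{d}$ for all $2 \le d \le n$, as required.

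There is really no obstacle here: the work has all been done in Theorem~\ref{th:gub}, and what remains is only the observation that the quadratic $2x-x^2$ in $x = \sqrt{2/(d+1)}$ is maximized at $x=1$. I would present the argument in a single short paragraph, perhaps noting explicitly the completion-of-the-square identity $2x - x^2 = 1 - (1-x)^2$ so that the reader sees at a glance both the inequality and the case of equality.
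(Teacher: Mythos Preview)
Your proposal is correct and follows essentially the same approach as the paper: the paper simply asserts that ``a simple computation'' shows $2\sqrt{2/(d+1)} - 2/(d+1) \le 1$ with equality only for $d=1$, and you have supplied that computation via the substitution $x = \sqrt{2/(d+1)}$ and the identity $2x - x^2 = 1 - (1-x)^2$.
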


\noindent
For $d=2$, the upper bound on $M_{NC}(n,d)$ from Theorem~\ref{th:gub} 
can be slightly improved: 

\begin{corollary}
For every $n \ge 2$, the following holds:
\[ M_{NC}(n,2) \le \frac{(5n-4)n}{3} \approx \frac{5n^2}{3} \enspace . \]
\end{corollary}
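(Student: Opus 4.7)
The plan is to simply specialize Theorem~\ref{th:gub} to $d=2$ (which forces $t=2$, the only admissible choice) and then plug in the sharp bound on $H_2(n,2)$ obtained via Mantel's theorem in Remark~\ref{rem:triangle-free}.

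First I would write out what Theorem~\ref{th:gub} says for $d=t=2$. Since $2^{d+1}/(t+1) = 8/3$ and $2^d = 4$, the bound becomes
\[
M_{NC}(n,2) \le 4 H_2(n,2) + \bigl(\tbinom{n}{2} - H_2(n,2)\bigr) \cdot \tfrac{8}{3} = \tfrac{4}{3} H_2(n,2) + \tfrac{8}{3}\tbinom{n}{2}.
\]
This expression is monotone increasing in $H_2(n,2)$, so any upper bound on $H_2(n,2)$ plugs in directly.

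Next I would invoke Remark~\ref{rem:triangle-free}, which gives $h_2(n,2) \le \frac{n}{2(n-1)}$, equivalently $H_2(n,2) \le \binom{n}{2} \cdot \frac{n}{2(n-1)} = \frac{n^2}{4}$. (This is just Mantel's theorem, translated to the Johnson-graph setting via Lemma~\ref{lem:johnson-n2}: avoiding a narrow triangle in $\langle\cF\rangle \le J(n,2)$ is the same as avoiding a triangle in the ordinary graph with edge set $\cF$.) Substituting,
\[
M_{NC}(n,2) \le \tfrac{4}{3}\cdot\tfrac{n^2}{4} + \tfrac{8}{3}\cdot\tfrac{n(n-1)}{2} = \tfrac{n^2}{3} + \tfrac{4n(n-1)}{3} = \tfrac{5n^2 - 4n}{3} = \tfrac{(5n-4)n}{3}.
\]

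There is no real obstacle here: the whole content is already in Theorem~\ref{th:gub} and Remark~\ref{rem:triangle-free}. The only thing worth double-checking is that the quantity $\tfrac{4}{3}H_2(n,2) + \tfrac{8}{3}\binom{n}{2}$ is indeed increasing in $H_2$, so that the upper bound $H_2(n,2) \le n^2/4$ yields a valid upper bound on $M_{NC}(n,2)$ (and that this does produce a strictly smaller value than the naive $2^d\binom{n}{d} = 2n(n-1)$, which it does for $n\ge 2$, matching the corollary's ``slightly improved'' claim).
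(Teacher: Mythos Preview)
Your proof is correct and follows essentially the same approach as the paper: both specialize Theorem~\ref{th:gub} to $d=t=2$ and then substitute the Mantel bound from Remark~\ref{rem:triangle-free}. The only cosmetic difference is that you phrase things in terms of $H_2(n,2)$ while the paper uses the normalized $h_2(n,2)$, but the computations are identical.
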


\begin{proof}
We know from Theorem~\ref{th:gub} that
\[ 
M_{NC}(n,2) \le \left( h_2(n,2) + (1-h_2(n,2)) \frac{2}{3} \right) \cdot 4\binom{n}{2}
\enspace .
\]
We know from Remark~\ref{rem:triangle-free} that $h_2(n,2) \le \frac{n}{2(n-1)}$.
The assertion of the corollary now follows from a straightforward calculation.
\end{proof}

\noindent
Similar slight improvements of the bound in Theorem~\ref{th:gub}
are possible for other small values of $d$.

\section{Open Problems} \label{sec:open-problems}

According to Theorem~\ref{th:rtd-nctd-ratio}, there exists a family $(\cC_n)_{n\ge1}$
of concept classes such that $\RTD(\cC_n)$ grows logarithmically with $n = |\cC_n|$
while $\NCTD(\cC_n)=1$ for all $n \ge 1$. The existence proof is based on the
probabilistic method and therefore non-constructive. The best RTD-NCTD ratio, 
known so far for a concrete class, is the ratio for the class of parity functions 
in $n$ Boolean variables (a class of size $2^n$). It is shown in~\cite{FKSSZ2022}, 
that the RTD of the parity class equals $n$ while the NCTD of this class is bounded 
by $n/4$.

\begin{description}
\item[Open Problem 1:]
Find a concrete class which establishes a large RTD-NCTD ratio
(ideally a ratio of order $\log|\cC|$).
\end{description}

\medskip
Theorem~\ref{th:dim1} characterizes NC-maximum classes of NC-dimension $1$
as classes of the form $\cC^2[G]$ for some tournament $G$. Hence the structure
of NC-maximum classes of dimension $1$ is now perfectly known,  whereas 
the structure of NC-maximum classes of higher dimension is still unknown.

\begin{description}
\item[Open Problem 2:]
Find structural properties which are shared by all NC-maximum classes
of NC-dimension $d \ge 2$.
\end{description}

\medskip
An obstacle for solving the second open problem is that we do not even know
the size $M_{NC}(n,d)$ of NC-maximum classes having NC-dimension $d\ge 2$ and being
defined over a domain of size $n$. While we can conclude from 
Theorem~\ref{th:dim1} that $M_{NC}(n,1) = 2n$, the quantity $M_{NC}(n,d)$ with $d\ge2$
is still unknown to us (although Theorem~\ref{th:gub} makes a first step
towards finding non-trivial bounds on $M_{NC}(n,d)$ for $d \ge 2$).

\begin{description}
\item[Open Problem 3:]
Find better (upper and lower) bounds on $M_{NC}(n,d)$ respectively, if possible,
determine $M_{NC}(n,d)$ exactly.
\end{description} 

\paragraph{Acknowledgements.}

I want to thank Stasys Jukna for drawing my attention to narrow cliques 
in Johnson graphs and for many fruitful and inspiring discussions. 



\end{document}